\numberwithin{equation}{section}
\newtheorem{theorem}{Theorem}[section]
\newtheorem{proposition}[theorem]{Proposition}
\newtheorem{lemma}[theorem]{Lemma}
\theoremstyle{definition}
\newcommand{\R}{\mathbb R}
\newcommand{\dd}{\mathrm{d}}
\newcommand{\eps}{\epsilon}
\newcommand{\im}{\operatorname{im}}
\newcommand{\ind}{\operatorname{ind}}
\newcommand{\Tr}{\operatorname{Tr}}
\newcommand{\tr}{\operatorname{tr}}
\newcommand{\End}{\operatorname{End}}
\newcommand{\dx}{\dd x}
\newcommand{\VF}{\mathfrak X}
\newcommand{\Tensor}{\mathfrak T}
\newcommand{\nor}{\operatorname{nor}}
\newcommand{\II}{I\!I}
\begin{document}
\title[Geometry of embedded pseudo-Riemannian surfaces]{The geometry of embedded pseudo-Riemannian surfaces in terms of Poisson brackets}

\author{Peter Hintz}
\address{Mathematisches Institut, Bunsenstra\ss e 3-5, 37073 G\"ottingen, Germany}
\email{peter.hintz@stud.uni-goettingen.de}

\subjclass[2000]{Primary: 53B30; Secondary: 15A63, 81R60}
\keywords{pseudo-Riemannian manifolds, embedded surfaces, Poisson brackets}

\begin{abstract}
  Arnlind, Hoppe and Huisken showed in \cite{ArnHopHuis} how to express the Gauss and mean curvature of a surface embedded in a Riemannian manifold in terms of Poisson brackets of the embedding coordinates. We generalize these expressions to the pseudo-Riemannian setting and derive explicit formulas for the case of surfaces embedded in $\R^m$ with indefinite metric.
\end{abstract}

\maketitle

\section{Introduction}

\noindent
Motivated by certain equations of Membrane Theory, Arnlind, Hoppe and Huisken found a way to express geometric quantities of a surface $\Sigma$ embedded in a Riemannian manifold in a purely algebraic way \cite{ArnHopHuis}: They showed using the canonical Poisson bracket on $C^\infty(\Sigma)$ that the Gauss curvature of a surface embedded in the euclidean $\R^m$ via the coordinates $x^1,\ldots,x^m$ is given by
\begin{equation}
\label{eq:Kintro}K=-\frac{1}{8(m-3)!}\sum_{L\in\{1,\ldots,m\}^{m-3}}\eps_{jklL}\eps_{irnL}\{x^i,\{x^k,x^l\}\}\{x^j,\{x^r,x^n\}\},
\end{equation}
thereby also providing a starting point to generalize the notion of curvature to certain non-commutative spaces, e.~g. matrix models that relate the Poisson bracket to the commutator of matrices.

The crucial point in the derivation of \ref{eq:Kintro} turns out to be the explicit construction of normal vectors to $\Sigma$ in terms of Poisson brackets of the embedding coordinates. In section \ref{sec:construction}, we will perform this construction for surfaces $\Sigma$ embedded in a pseudo-Riemannian manifold. In theorem \ref{theorem:GaussMeanRnnuFull}, we will then derive a generalization of equation \ref{eq:Kintro} to the pseudo-Riemannian setting.

\section{Preliminaries}
\label{sec:prelim}

\noindent
We use the following notation (cf. \cite{ArnHopHuis}, \cite{ONeill}): Let $M$ be an $m$-dimensional pseudo-Riemannian manifold and $\Sigma\subset M$ a 2-dimensional orientable pseudo-Riemannian submanifold with codimension $p=m-2$. The metric tensors of $M$ and $\Sigma$ are denoted by $\bar g$ and $g$, the Levi-Civita connections by $\bar\nabla$ and $\nabla$, the curvature tensors by $\bar R$ and $R$ (with the convention $R_{XY}Z=\nabla_{[X,Y]}Z-[\nabla_X,\nabla_Y]Z$). Indices $a,b,c,d,p,q$ run from $1$ to $2$, indices $i,j,k,l,r,s,t,v,w$ from $1$ to $m$ and $A,B$ from $1$ to $p$. $x^i$ and $u^a$ are local coordinates of $M$ and $\Sigma$, the Christoffel symbols are $\bar\Gamma_{ij}^k$ and $\Gamma_{ab}^c$, respectively. The metric of $M$ in the coordinates $x^i$ is given by the matrix $(\bar g_{ij})$, the metric of $\Sigma$ in the coordinates $u^a$ is given by $(g_{ab})=\bar g(e_a,e_b)$, $e_a=(\partial_a x^i)\partial_i$ being an oriented basis of the tangent space $T\Sigma$. The inverses of these matrices are denoted by $(\bar g^{ij})$ and $(g^{ab})$. We write $g=\det(g_{ab})$. Furthermore, let $N_A=N_A^m\partial_m$ be a smooth local orthonormal frame of the normal bundle $T\Sigma^\perp\subset TM$, that is, $\bar g(N_A,N_B)=\delta_{AB}\sigma_A$ with $\sigma_A=\pm 1$. Finally, let $\eps^{ab}$ be the totally antisymmetric tensor of rank 2.

$\VF(M)$ denotes the space of vector fields on $M$, $\Omega^1(M)$ the space of 1-forms, $\Tensor^r_s(M)$ the space of tensor fields on $M$ of type $(r,s)$. The space of smooth maps $Z\colon\Sigma\to TM$ with $\pi_{TM\to M}\circ Z=id_\Sigma$ is denoted by $\bar\VF(\Sigma)$. $\bar\Omega^1(\Sigma)$ and $\bar\Tensor^r_s(\Sigma)$ are defined analogeously. For $p\in\Sigma$ we have the orthogonal projections $\tan_p\colon T_pM\to T_p\Sigma$ and $\nor_p=1-\tan_p\colon T_pM\to T_p\Sigma^\perp$.

  For $X,Y\in\VF(\Sigma)$, the Gauss and Weingarten equations
  \begin{align}
\label{eq:Gauss2}\bar\nabla_X Y&=\nabla_X Y+\II(X,Y)\\
\label{eq:Weingarten}\bar\nabla_X N_A&=-W_A(X)+D_XN_A
  \end{align}
  hold with $W_A(X)=-\tan\bar\nabla_X N_A$ and $D_XN_a=\nor\bar\nabla_X N_A$. The orthonormal decomposition of the shape tensor $\II$ is
  \begin{equation}
  \label{eq:ShapeOrthonormalExpansion}
    \II(X,Y)=\sum_{A=1}^p\sigma_A\bar g(\II(X,Y),N_A)N_A=\sum_{A=1}^p\sigma_A h_A(X,Y)N_A
  \end{equation}
  with
  \[
    h_A(X,Y)=\bar g(\II(X,Y),N_A)=-\bar g(X,\bar\nabla_YN_A).
  \]
  Setting $h_{A,ab}=h_A(e_a,e_b)$, we have
  \begin{align}
\label{eq:hAab}h_{A,ab}&=h_{A,ba}\\
\label{eq:WAab}(W_A)^a_b&=g^{ac}W_{A,bc}=g^{ac}g(e_b,W_Ae_c)=g^{ac}h_{A,bc}=g^{ac}h_{A,cb}.
  \end{align}
  Using \eqref{eq:ShapeOrthonormalExpansion}, we can write the Gauss curvature of $\Sigma$ as
  \begin{align}
    K=\,&\frac{1}{g}\left[\bar g(\bar R(e_1,e_2)e_1,e_2)+\bar g(\II(e_1,e_1),\II(e_2,e_2))-\bar g(\II(e_1,e_2),\II(e_1,e_2))\right]\nonumber\\
\label{eq:GaussWithDet}&=\frac{1}{g}\left(\bar g(\bar R(e_1,e_2)e_1,e_2)+\sum_{A=1}^p\sigma_A\det\left(h_{A,ab}\right)\right).
  \end{align}
  The mean curvature vector is given by
  \begin{equation}
  \label{eq:MeanWithTrace}H=\frac{1}{2}g^{ab}\II(e_a,e_b)=\frac{1}{2}\sum_{A=1}^p\sigma_A(W_A)^a_aN_A=\frac{1}{2}\sum_{A=1}^p\sigma_A(\tr W_A)N_A.
  \end{equation}

\subsection{The Poisson algebra $C^\infty(\Sigma)$}

  We have the following standard construction: Let $\omega=\rho\,\dd u_1\wedge\dd u_2$ be a symplectic form on $\Sigma$. There is a unique map $X\colon C^\infty(\Sigma)\to\VF(\Sigma)$, $f\mapsto X_f$, with $\omega(X_f,Y)=\dd f(Y)$ for all $Y\in\VF(\Sigma)$. Then
  \begin{equation}
    \label{eq:PoissonBracket}\{f,g\}:=\omega(X_f,X_g)=\frac{1}{\rho}\eps^{ab}(\partial_a f)(\partial b g)
  \end{equation}
  is a Poisson bracket on $C^\infty(\Sigma)$.

  We continue by recalling some of the results obtained in \cite{ArnHopHuis} which are still valid in the pseudo-Riemannian setting. Define the functions
  \begin{align}
    \mathcal P^{ij}&=\{x^i,x^j\}\\
	\label{eq:SAexplicit}\mathcal S_A^{ij}&=\frac{1}{\rho}\eps^{ab}(\partial_ax^i)(\bar\nabla_bN_A)^j=\{x^i,N_A^j\}+\{x^i,x^k\}\bar\Gamma^j_{kl}N_A^l.
  \end{align}
  They are components of $\bar\Tensor^2_0(\Sigma)$ tensors because by lowering the second index we obtain maps $P,S_A\in\End(\bar\VF(\Sigma))$,
  \begin{align*}
    \mathcal P(X)&=\mathcal P^{ik}\bar g_{kj}X^j\partial_i=-\frac{1}{\rho}\bar g(X,e_a)\eps^{ab}e_b\\
    \mathcal S_A(X)&=\mathcal S_A^{ik}\bar g_{kj}X^j\partial_i=-\frac{1}{\rho}\bar g(X,\bar\nabla_a N_A)\eps^{ab}e_b.
  \end{align*}
  Proposition 3.4 in \cite{ArnHopHuis} states that
  \begin{align}
    \label{eq:P2trace}\tr\mathcal P^2&=-2\frac{g}{\rho^2}\\
    \label{eq:SAtrace}\tr\mathcal S_A^2&=-\frac{2}{\rho^2}\det\left(h_{A,ab}\right).
  \end{align}
  
  Defining $\mathcal B_A\in\End(\bar\VF(\Sigma))$ to be the composition $\mathcal B_A=\mathcal P\mathcal S_A$, one can check (cf. proposition 3.3 in \cite{ArnHopHuis}) that for $X\in\bar\VF(\Sigma)$,
  \begin{equation}
  \label{eq:BAandWeingarten1}
    \mathcal B_A(X)=-\frac{g}{\rho^2}\bar g (X,\bar\nabla_a N_A)g^{ab}e_b.
  \end{equation}
  In particular, for $Y\in\VF(\Sigma)$, $\mathcal B_A(Y)=\frac{g}{\rho^2}W_A(Y)$. Consequently, $\tr\mathcal B_A=\frac{g}{\rho^2}\tr W_A$.
  
  The equations \eqref{eq:GaussWithDet} and \eqref{eq:MeanWithTrace} now yield the formulas
  \begin{align}
    \label{eq:Gauss}K&=\frac{1}{g}\bar g(\bar R(e_1,e_2)e_1,e_2)-\frac{\rho^2}{2g}\sum_{A=1}^p\sigma_A\tr\mathcal S_A^2\\
  	\label{eq:Mean} H&=\frac{\rho^2}{2g}\sum_{A=1}^p\sigma_A(\tr\mathcal B_A)N_A.
  \end{align}
  (Note the occurrence of the signs $\sigma_A$ in the pseudo-Riemannian setting as opposed to the Riemannian setting.) In the case $M=\R^m_\nu$ with metric $\bar g_{ij}=\delta_{ij}\bar g_j$, $\bar g_1=\cdots=\bar g_\nu=-1$, $\bar g_{\nu+1}=\cdots=\bar g_m=1$, these formulas become
  \begin{align}
  \label{eq:GaussRnnu}K&=-\frac{\rho^2}{2g}\sum_{A=1}^p\sigma_A\bar g_i\bar g_j\{x^i,N_A^j\}\{x^j,N_A^i\}\\
  \label{eq:MeanRnnu}H&=\frac{\rho^2}{2g}\sum_{A=1}^p\sigma_A\bar g_i\bar g_j\{x^i,x^j\}\{x^j,N_A^i\}N_A^k\partial_k.
  \end{align}

\section{Construction of normal vectors}
\label{sec:construction}

  The formulas \eqref{eq:GaussRnnu} and \eqref{eq:MeanRnnu} can be written solely in terms of Poisson brackets provided that normal vectors can be expressed by Poisson brackets of the embedding coordinates $x^i$. We now generalize the construction of normal vectors in \cite{ArnHopHuis} to the pseudo-Riemannian case.

  First, we introduce further notation: For multi-indices $I=(i_1,\ldots,i_k)$ and $J=(j_1,\ldots,j_k)$ let $\bar g^{IJ}:=\prod_{l=1}^k \bar g^{i_lj_l}$, $\bar g_{IJ}:=\prod_{l=1}^k \bar g_{i_lj_l}$ and $\dx^I:=\dx^{i_1}\otimes\cdots\otimes\dx^{i_k}\in\Omega^1(M)\otimes\cdots\otimes\Omega^1(M)\cong\Tensor^0_k(M)$. Since on $\Omega^1(M)$ we have the scalar product (to be understood as a scalar product at each point) $\hat g(\eta_i\dx^i,\omega_j\dx^j)=\eta_i\omega_j\bar g^{ij}$, we obtain a scalar product on $\Tensor^0_{p-1}(M)$ (again, pointwise),
  \begin{equation}
    \bar g_\otimes(\eta_I\dx^I,\omega_J\dx^J)=\eta_I\omega_J\bar g^{IJ}.
  \end{equation}

  Now, proposition 4.2 in \cite{ArnHopHuis} states that for any multi-index $J\in\{1,\ldots,m\}^{p-1}$,
	\begin{equation}
	\label{eq:NormalVector}
	  Z_J=\frac{\rho}{2\sqrt{|g|(p-1)!}}\bar g^{ij}\eps_{jklJ}\mathcal P^{kl}\partial_i\in T\Sigma^\perp,
	\end{equation}
  where $\eps_{jklJ}$ denotes the Levi-Civita tensor of $M$.

  By raising the $p-1$ indices, we can also regard the tensor $Z\in\bar\Tensor^1_{p-1}(\Sigma)$ as a tensor of type $((p-1)+1,0)$. Explicitly,
  \begin{equation}
  \label{eq:ZTensorUp}
    Z^J=\bar g^{JK}Z_K=\frac{\rho}{2\sqrt{|g|(p-1)!}}\bar g_{kr}\bar g_{ln}\eps^{irnJ}\{x^k,x^l\}\partial_i.
  \end{equation}

  From now on, we use $\delta:=\ind\bar g-\ind g$, $\ind$ being the index of the respective bilinear form.
  \begin{lemma}
    \label{lemma:Zsum}
    $Z_K^iZ^{Kj}=(-1)^\delta\left(\bar g^{ij}+\frac{\rho^2}{g}(\mathcal P^2)^{ij}\right).$
  \end{lemma}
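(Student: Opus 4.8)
The plan is to insert the component forms of the two tensors and reduce everything to a contraction of Levi-Civita tensors. From \eqref{eq:NormalVector} and \eqref{eq:ZTensorUp} one has
\[
Z_K^i=\frac{\rho}{2\sqrt{|g|(p-1)!}}\,\bar g^{is}\eps_{sabK}\mathcal P^{ab},\qquad
Z^{Kj}=\frac{\rho}{2\sqrt{|g|(p-1)!}}\,\bar g_{cr}\bar g_{dn}\eps^{jrnK}\mathcal P^{cd},
\]
so that
\[
Z_K^iZ^{Kj}=\frac{\rho^2}{4|g|(p-1)!}\,\bar g^{is}\bar g_{cr}\bar g_{dn}\,\mathcal P^{ab}\mathcal P^{cd}\,\eps_{sabK}\eps^{jrnK}.
\]
The entire computation therefore turns on evaluating $\eps_{sabK}\eps^{jrnK}$, where the $p-1$ indices gathered in $K$ are summed.

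For this I would use the fact that raising all $m$ indices of the Levi-Civita tensor contributes a factor $\det(\bar g^{ij})$, whose sign is $\sgn(\det\bar g)=(-1)^{\ind\bar g}$, so that a full contraction reads $\eps_{i_1\cdots i_m}\eps^{j_1\cdots j_m}=(-1)^{\ind\bar g}\det(\delta^{j_a}_{i_b})$. Contracting the $p-1=m-3$ index pairs carried by $K$ collapses this rank-$m$ generalized Kronecker delta to the rank-$3$ one and produces the combinatorial factor $(p-1)!$:
\[
\eps_{sabK}\eps^{jrnK}=(-1)^{\ind\bar g}(p-1)!\,\delta^{jrn}_{sab}.
\]
The factorial cancels the one in the prefactor, leaving $\tfrac{\rho^2}{4|g|}(-1)^{\ind\bar g}$ times the rank-$3$ Kronecker contraction.

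Next I would expand $\delta^{jrn}_{sab}$ as the $3\times3$ determinant of Kronecker symbols into its six signed terms and contract each against $\bar g^{is}\bar g_{cr}\bar g_{dn}\mathcal P^{ab}\mathcal P^{cd}$, the antisymmetry of $\mathcal P$ organizing the result. The two terms in which the free index $j$ is tied to $s$ yield multiples of $\bar g^{ij}\mathcal P^{ab}\mathcal P_{ab}=-\bar g^{ij}\tr\mathcal P^2$ (indices lowered with $\bar g$), while in the remaining four terms $j$ is tied to a Poisson-bracket index and each reduces, using the manifestly symmetric identity $(\mathcal P^2)^{ij}=\mathcal P^{ik}\bar g_{kl}\mathcal P^{lj}$, to $(\mathcal P^2)^{ij}$. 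Summing gives
\[
\bar g^{is}\bar g_{cr}\bar g_{dn}\mathcal P^{ab}\mathcal P^{cd}\,\delta^{jrn}_{sab}=-2\bar g^{ij}\tr\mathcal P^2+4(\mathcal P^2)^{ij},
\]
and substituting $\tr\mathcal P^2=-2g/\rho^2$ from \eqref{eq:P2trace} turns this into $\tfrac{4g}{\rho^2}\bar g^{ij}+4(\mathcal P^2)^{ij}$.

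The remaining, and genuinely delicate, point is the sign bookkeeping. After multiplying by $\tfrac{\rho^2}{4|g|}(-1)^{\ind\bar g}$ the surviving scalar factors are $g/|g|$ and $\rho^2/|g|$; because the induced metric is indefinite, $g=\det(g_{ab})$ can be negative, and one must record $g/|g|=\sgn(g)=(-1)^{\ind g}$ together with $1/|g|=(-1)^{\ind g}/g$. Pulling out this common $(-1)^{\ind g}$ and combining it with the Levi-Civita sign gives $(-1)^{\ind\bar g}(-1)^{\ind g}=(-1)^{\ind\bar g-\ind g}=(-1)^{\delta}$, which is exactly the claimed prefactor in front of $\bar g^{ij}+\tfrac{\rho^2}{g}(\mathcal P^2)^{ij}$. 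I expect this interplay between the signature-dependent sign of the Levi-Civita contraction and the sign of $\det(g_{ab})$ to be the only error-prone part; once the antisymmetry of $\mathcal P$ is used, the tensor algebra is routine.
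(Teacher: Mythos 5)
Your proof is correct and follows exactly the route the paper's own (one-line) proof indicates: use the antisymmetry of $\mathcal P$, equation \eqref{eq:P2trace}, and the contraction identity $\eps_{rstK}\eps^{jvwK}=(-1)^{\ind\bar g}(p-1)!\,\bigl(\delta^j_r\delta^v_s\delta^w_t+\cdots\bigr)$, which are precisely your three ingredients. Your explicit sign bookkeeping --- $\sgn(\det\bar g)=(-1)^{\ind\bar g}$, $\sgn(g)=(-1)^{\ind g}$, and $(-1)^{\ind\bar g+\ind g}=(-1)^{\ind\bar g-\ind g}=(-1)^\delta$ --- correctly fills in the details the paper leaves implicit.
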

  \begin{proof}
    Use $\mathcal P^{ij}=-\mathcal P^{ji}$, equation \eqref{eq:P2trace} and $\eps_{rstK}\eps^{jvwK}=(-1)^{\ind\bar g}(p-1)!(\delta^j_r\delta^v_s\delta^w_t+\delta^j_s\delta^v_t\delta^w_r+\delta^j_t\delta^v_r\delta^w_s - \delta^j_s\delta^v_r\delta^w_t-\delta^j_t\delta^v_s\delta^w_r-\delta^j_r\delta^v_t\delta^w_s)$.\qedhere
  \end{proof}

  Define the tensor
  \begin{equation}
  \label{def:Zmap}
    \mathcal Z^{IJ}=(-1)^\delta\bar g(Z^I,Z^J).
  \end{equation}
  By lowering the multi-index $J$, we can regard $\mathcal Z$ as an endomorphism of $\bar\Tensor^0_{p-1}(\Sigma)$,
  \begin{equation}
	\omega_I\dx^I\mapsto\mathcal Z^I_J\omega_I\dx^J.
  \end{equation}

  \begin{proposition}
  \label{prop:ZProperties}
    The map $\mathcal Z\in\End(\bar\Tensor^0_{p-1}(\Sigma))$ satisfies
	\begin{enumerate}
	  \item $\mathcal Z^2=\mathcal Z$.
	  \item $\Tr\mathcal Z=p$.
	  \item For $\eta,\omega\in\Tensor^0_{p-1}$, $\bar g_\otimes(\mathcal Z\eta,\omega)=\bar g_\otimes(\eta,\mathcal Z\omega)$.
	\end{enumerate}
  \end{proposition}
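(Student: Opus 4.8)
The plan is to recognize the right-hand side of Lemma \ref{lemma:Zsum} as the orthogonal projection onto the normal bundle, after which all three assertions reduce to standard properties of that projection together with the fact that each $Z^I$ is itself a normal vector.

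First I would show that the $(2,0)$-tensor $\bar g^{ij}+\frac{\rho^2}{g}(\mathcal P^2)^{ij}$ is, after lowering one index, the orthogonal projection $\nor\colon TM\to T\Sigma^\perp$. It suffices to test the associated endomorphism on the adapted frame $\{e_1,e_2,N_1,\ldots,N_p\}$: since $\mathcal P(N_A)=-\frac1\rho\bar g(N_A,e_a)\eps^{ab}e_b=0$, the $\mathcal P^2$-term kills normal vectors and the endomorphism fixes each $N_A$; on tangent vectors one computes $\mathcal P^2(e_a)=-\frac{g}{\rho^2}e_a$ (consistent with $\tr\mathcal P^2=-2g/\rho^2$), so the endomorphism annihilates each $e_a$. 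Hence Lemma \ref{lemma:Zsum} reads $Z_K^iZ^{Kj}=(-1)^\delta\nor^{ij}$, and $\nor$ is idempotent, $\bar g$-self-adjoint, of trace $p$, and acts as the identity on $T\Sigma^\perp$; in particular $\nor^i_{\ j}Z^{Jj}=Z^{Ji}$ for every multi-index $J$.

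Lowering the index $J$ in \eqref{def:Zmap} puts the endomorphism in the component form $\mathcal Z^I_J=(-1)^\delta Z^{Ik}Z_{Jk}$ with $Z_{Jk}=\bar g_{kl}Z_J^l$. Assertion (3) is then immediate from $\mathcal Z^{IJ}=\mathcal Z^{JI}$ (symmetry of $\bar g$): a single raising/lowering step gives $\bar g_\otimes(\mathcal Z\eta,\omega)=\mathcal Z^{IJ}\eta_I\omega_J=\bar g_\otimes(\eta,\mathcal Z\omega)$.

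For assertion (1) I would compute
\[
  \mathcal Z^I_J\mathcal Z^J_L=Z^{Ik}\Bigl(\sum\nolimits_J Z_{Jk}Z^{Jl}\Bigr)Z_{Ll}=(-1)^\delta\,Z^{Ik}\nor_k^{\ l}\,Z_{Ll},
\]
replacing the inner sum by the normal projection via the reformulated Lemma; since $\nor$ fixes the normal vector $Z^I$ one has $Z^{Ik}\nor_k^{\ l}=Z^{Il}$, and the expression collapses to $(-1)^\delta Z^{Il}Z_{Ll}=\mathcal Z^I_L$. For assertion (2) the same substitution yields
\[
  \Tr\mathcal Z=\mathcal Z^I_I=(-1)^\delta\bar g_{kl}\sum\nolimits_I Z^{Ik}Z_I^l=\bar g_{kl}\,\nor^{kl}=\nor^k_{\ k}=p,
\]
the two factors $(-1)^\delta$ cancelling and the last equality being the rank of the projection onto the $p$-dimensional normal bundle. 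The one genuine subtlety --- hence the step I would treat most carefully --- is that the multi-indices range over the highly redundant set $\{1,\ldots,m\}^{p-1}$, so summing over them is not obviously compatible with the $p$-dimensional geometry; it is precisely Lemma \ref{lemma:Zsum}, read as a ``resolution of the identity on $T\Sigma^\perp$'', that makes every such contraction behave correctly, after which the argument is pure index bookkeeping.
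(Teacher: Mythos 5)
Your proposal is correct and takes essentially the same route as the paper: the paper also deduces (1) and (2) directly from Lemma \ref{lemma:Zsum} (citing the Riemannian analogue in \cite{ArnHopHuis} rather than writing out the contraction you perform, namely reading $(-1)^\delta Z_K^iZ^{Kj}$ as the orthogonal projection onto $T\Sigma^\perp$ and using that it fixes the normal vectors $Z^I$), and proves (3) by an index manipulation whose content is exactly your symmetry observation $\mathcal Z^{IJ}=\mathcal Z^{JI}$.
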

  \begin{proof}
    The first two equations follow from lemma \ref{lemma:Zsum} (cf. lemma 4.3 in \cite{ArnHopHuis}). To prove the third equation, we calculate
	\begin{align*}
	  \bar g_\otimes(\mathcal Z\eta,\omega)&=\bar g_\otimes(\mathcal Z^I_J\eta_I\dx^J,\omega_K\dx^K)=(-1)^\delta\eta_I\omega_K\bar g^{JK}\bar g_{ij}Z_J^iZ^{Ij}\\
	    &=(-1)^\delta\eta_I\omega_K\bar g_{ij}Z^{Ki}Z^{Ij}=(-1)^\delta\eta_I\omega_K\bar g_{ij}Z^{Ii}Z^{Kj}=\cdots\\
		&=\bar g_\otimes(\eta,\mathcal Z\omega).\qedhere
	\end{align*}
  \end{proof}

  \begin{proposition}
  \label{prop:OrthProj}
    Let $(V,g)$ be a scalar product space. Let $P\colon V\to V$ be linear with $P^2=P$ and $g(Pv,w)=g(v,Pw)$ for all $v,w\in V$. Then $W=\im P$ is a non-degenerate subspace of $V$.
  \end{proposition}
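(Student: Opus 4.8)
The plan is to exploit the two hypotheses separately. Idempotency $P^2=P$ gives a purely algebraic direct-sum splitting of $V$, while self-adjointness of $P$ together with the non-degeneracy of the ambient form $g$ identifies $\ker P$ with the orthogonal complement of $\im P$. Combining these two facts will show that $W=\im P$ intersects its own orthogonal complement trivially, which is exactly the statement that $W$ is non-degenerate.

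First I would record the standard consequence of idempotency: for any $v\in V$ we have $v=Pv+(v-Pv)$ with $Pv\in\im P$ and $P(v-Pv)=Pv-P^2v=0$, so $v-Pv\in\ker P$; hence $V=\im P+\ker P$. The sum is direct because any $w\in\im P\cap\ker P$ can be written $w=Pu$ and then $w=Pu=P^2u=Pw=0$. Thus $V=\im P\oplus\ker P$.

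The main step, and the one place where both hypotheses genuinely enter, is to prove $\ker P=(\im P)^\perp$. For the inclusion $\ker P\subseteq(\im P)^\perp$, if $Pw=0$ then for every $v\in V$ the self-adjointness gives $g(Pv,w)=g(v,Pw)=0$, so $w$ is orthogonal to all of $\im P$. For the reverse inclusion, suppose $w\in(\im P)^\perp$; then $g(v,Pw)=g(Pv,w)=0$ for every $v\in V$, and since $g$ is non-degenerate this forces $Pw=0$, i.e. $w\in\ker P$. I expect this to be the crux of the argument, the only subtlety being the correct invocation of non-degeneracy of $g$ to pass from $g(v,Pw)=0$ for all $v$ to $Pw=0$.

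Finally I would combine the two steps: intersecting $\ker P=(\im P)^\perp$ with $W=\im P$ and using the direct-sum decomposition yields $W\cap W^\perp=\im P\cap(\im P)^\perp=\im P\cap\ker P=\{0\}$ (equivalently, $V=W\oplus W^\perp$). Since a subspace of a scalar product space is non-degenerate precisely when it meets its orthogonal complement only in $0$, this shows $W$ is non-degenerate and completes the proof.
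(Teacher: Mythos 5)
Your proof is correct and follows essentially the same route as the paper's: the paper's one-line argument also rests on the decomposition $v=Pv+(v-Pv)$, with self-adjointness placing $v-Pv=\ker P$-part into $W^\perp$, giving $V=W+W^\perp$ and hence non-degeneracy of $W$. You have simply made explicit the steps the paper leaves implicit (the identification $\ker P=(\im P)^\perp$ and the triviality of $W\cap W^\perp$), so there is nothing to object to.
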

  \begin{proof}
    This follows from $v=Pv+(v-Pv)$ for $v\in V$, i.~e. $V=W+W^\perp$.
  \end{proof}

  We can now construct an orthonormal basis of $T\Sigma^\perp$. The main ingredient of the construction is the non-degeneracy of the image of $\mathcal Z$.
  \begin{proposition}
  \label{prop:NormalVectors}
    \begin{enumerate}
	  \item The vector space $\bar\Tensor^0_{p-1}(\Sigma)$ with the metric $\bar g_\otimes$ has an orthonormal basis $E^I=E^I_J\dx^J$ of eigenvectors of $\mathcal Z\in\End(\bar\Tensor^0_{p-1}(\Sigma))$.
	  \item Define the normal vectors $\hat N^I=Z^JE^I_J$. Then exactly $p=\dim T\Sigma^\perp$ of the $\hat N^I$ are different from $0$. The $E^I$ can be chosen in such a way that the non-vanishing $\hat N^I$ are a smooth orthonormal frame of $T\Sigma^\perp$.
	\end{enumerate}
  \end{proposition}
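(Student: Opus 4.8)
The plan is to exploit that, by Proposition \ref{prop:ZProperties}, $\mathcal Z$ is a $\bar g_\otimes$-self-adjoint projection, and to reduce the whole statement to elementary linear algebra on its (non-degenerate) image. For part (1) I would first apply Proposition \ref{prop:OrthProj} with $P=\mathcal Z$ to conclude that $W:=\im\mathcal Z$ is non-degenerate. Since $\mathcal Z^2=\mathcal Z$, the space $V:=\bar\Tensor^0_{p-1}(\Sigma)$ splits as $\im\mathcal Z\oplus\ker\mathcal Z$ into the eigenspaces for the eigenvalues $1$ and $0$ (any $w=\mathcal Z u\in W$ satisfies $\mathcal Z w=\mathcal Z^2 u=w$). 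Self-adjointness gives $\ker\mathcal Z\subseteq W^\perp$, and a rank--nullity count together with the non-degeneracy of $W$ (so that $\dim W^\perp=\dim V-\dim W$) upgrades this to $\ker\mathcal Z=W^\perp$; in particular $W^\perp$ is non-degenerate too. Choosing a $\bar g_\otimes$-orthonormal basis on each of the two non-degenerate summands $W$ and $W^\perp$ and taking their union yields the desired orthonormal basis $\{E^I\}$ of eigenvectors of $\mathcal Z$.

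For the counting in part (2) I would use that the trace of a projection equals the dimension of its image (a metric-independent fact), so Proposition \ref{prop:ZProperties}(2) gives $\dim W=\Tr\mathcal Z=p$; hence exactly $p$ of the $E^I$ lie in $W$ (eigenvalue $\lambda_I=1$) and the rest lie in $\ker\mathcal Z$ (eigenvalue $\lambda_I=0$). The heart of the argument is the identity
\[
  \bar g(\hat N^I,\hat N^{I'})=(-1)^\delta\lambda_I\,\bar g_\otimes(E^I,E^{I'}),
\]
which I would obtain by writing $\bar g(\hat N^I,\hat N^{I'})=E^I_JE^{I'}_{J'}\bar g(Z^J,Z^{J'})=(-1)^\delta E^I_JE^{I'}_{J'}\mathcal Z^{JJ'}$ via \eqref{def:Zmap}, then lowering one multi-index and invoking the eigenvector relation $\mathcal Z^J_LE^I_J=\lambda_IE^I_L$. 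By $\bar g_\otimes$-orthonormality this reads $\bar g(\hat N^I,\hat N^{I'})=(-1)^\delta\lambda_I\varepsilon_I\delta^{II'}$ with $\varepsilon_I=\bar g_\otimes(E^I,E^I)=\pm1$.

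It then follows that the $p$ vectors with $\lambda_I=1$ are mutually orthogonal of square-length $\pm1$, hence nonzero and linearly independent; since each $Z^J\in T\Sigma^\perp$, all $\hat N^I=Z^JE^I_J$ lie in the $p$-dimensional non-degenerate space $T\Sigma^\perp$, so these $p$ vectors form a basis of it, orthonormal with signs $\sigma_I=(-1)^\delta\varepsilon_I$. Each vector with $\lambda_I=0$ is null and, by the identity above, orthogonal to every $\hat N^{I'}$ and in particular to this basis; non-degeneracy of $T\Sigma^\perp$ then forces it to vanish. This establishes that exactly $p$ of the $\hat N^I$ are nonzero and that they form an orthonormal frame of $T\Sigma^\perp$.

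It remains to arrange smoothness, which is where the only genuine care is needed. Since the eigenvalues of $\mathcal Z$ are the constants $0$ and $1$ and the rank $\dim\im\mathcal Z=p$ is constant over $\Sigma$, there are no eigenvalue crossings, and $\im\mathcal Z$, $\ker\mathcal Z$ are smooth subbundles of $\bar\Tensor^0_{p-1}(\Sigma)$. Near any point I would start from a local smooth frame of each subbundle---obtained, for instance, by applying $\mathcal Z$ (resp.\ $1-\mathcal Z$) to a fixed frame of $\bar\Tensor^0_{p-1}(\Sigma)$ and extracting a maximal independent subset---and then run Gram--Schmidt. The main obstacle is that Gram--Schmidt in an indefinite scalar product can stall at a null partial sum; this is handled by observing that $W$ and $W^\perp$ are non-degenerate with locally constant signature, so after a local permutation of the frame the process never meets a null vector and produces a smooth $\bar g_\otimes$-orthonormal eigenframe $\{E^I\}$. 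Feeding this into $\hat N^I=Z^JE^I_J$ yields the asserted smooth orthonormal frame of $T\Sigma^\perp$.
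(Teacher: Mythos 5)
Your proof is correct and follows essentially the same route as the paper: part (1) is deduced from Propositions \ref{prop:ZProperties} and \ref{prop:OrthProj}, and part (2) from the identity $\bar g(\hat N^I,\hat N^{I'})=(-1)^\delta\mu^I\,\bar g_\otimes(E^I,E^{I'})$ together with the trace count $\Tr\mathcal Z=p$ and the non-degeneracy of $T\Sigma^\perp$, with you merely spelling out the eigenspace splitting $\im\mathcal Z\oplus\ker\mathcal Z$ and the smoothness issue that the paper leaves implicit. The only inaccuracy is in your added smoothness discussion: permuting a local frame of a non-degenerate subbundle need not avoid null vectors (all frame vectors can be null, e.g.\ a hyperbolic pair $u,w$ with $\bar g_\otimes(u,u)=\bar g_\otimes(w,w)=0$, $\bar g_\otimes(u,w)=1$), so one should instead pick near each point a smooth section of non-vanishing $\bar g_\otimes$-square (which exists by non-degeneracy and polarization) and induct on the rank --- but this refinement goes beyond the paper's own proof, which does not address smoothness at all.
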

  \begin{proof}
    \begin{enumerate}
	  \item follows immediately from propositions \ref{prop:ZProperties} and \ref{prop:OrthProj}.
	  \item Let $\mu^I$ be the eigenvalue of the eigenvector $E^I$ of $\mathcal Z$. Then
	    \begin{align*}
		  \bar g(\hat N^I,\hat N^J)&=(-1)^\delta E^J_L\bar g^{LM}\mathcal Z^K_M E^I_K=(-1)^\delta E^J_L\bar g^{LM}\mu^I E^I_M\\
		   &=(-1)^\delta\mu^I\bar g_\otimes(E^I,E^J)=\pm\mu^I\delta^{IJ}.
		\end{align*}
		As $\mu^I\in\{0,1\}$ and $\Tr\mathcal Z=p$, exactly $p$ of the $\mu^I$ are $1$. The corresponding $\hat N^I$ span $T\Sigma^\perp$. Since $T\Sigma^\perp$ is non-degenerate, the other $\hat N^J\in T\Sigma^\perp$ vanish.\qedhere
	\end{enumerate}
  \end{proof}

  To obtain explicit formulas for the curvature of $\Sigma$, we need to slightly extend lemma 4.4 in \cite{ArnHopHuis}, the proof essentially being the same:
  \begin{lemma}
  \label{lemma:DoubleTrace}
    For $X\in\bar\VF(\Sigma)$, define $\mathcal S^{ij}(X)=\frac{1}{\rho}\eps^{ab}(\partial_ax^i)(\bar\nabla_b X)^j$. Then for all $N,N'\in\VF^\perp(\Sigma)$ and $f,h\in C^\infty(\Sigma)$,
	\begin{equation}
	\label{eq:DoubleTrace}
	  \mathcal S^i_j(fN)\mathcal S^j_i(hN')=fh\mathcal S^i_j(N)\mathcal S^j_i(N').
	\end{equation}
	If $f=const$, then \eqref{eq:DoubleTrace} holds for arbitrary $N\in\bar\VF(\Sigma)$.
  \end{lemma}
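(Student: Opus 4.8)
The plan is to reduce everything to the Leibniz rule for $\bar\nabla$ together with the single geometric fact that $\bar g(e_a,N)=0$ whenever $N$ is normal, since each $e_a$ is tangent to $\Sigma$. First I would record the effect of the product rule on the basic building block: from $\bar\nabla_b(fN)=(\partial_bf)N+f\bar\nabla_bN$ and the definition of $\mathcal S$ one obtains the pointwise identity
\[
  \mathcal S^{ij}(fN)=\{x^i,f\}N^j+f\,\mathcal S^{ij}(N),
\]
where the first summand uses $\{x^i,f\}=\frac1\rho\eps^{ab}(\partial_ax^i)(\partial_bf)$ from \eqref{eq:PoissonBracket}. The analogous splitting applies to $\mathcal S^{jl}(hN')$. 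Since lowering the second index with $\bar g$ and summing over $i,j$ turns the left-hand side of \eqref{eq:DoubleTrace} into a full contraction (a trace of a composition of endomorphisms), I would substitute both splittings and expand.

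The expansion produces four terms. The term quadratic in the ``$f\,\mathcal S(N)$'' and ``$h\,\mathcal S(N')$'' pieces is exactly $fh\,\mathcal S^i_j(N)\mathcal S^j_i(N')$, the claimed right-hand side, so the real work is to show that the remaining three cross terms vanish. Here the mechanism is uniform: each cross term carries at least one factor $\{x^i,f\}$ or $\{x^j,h\}$, whose antisymmetric prefactor $\frac1\rho\eps^{ab}(\partial_ax^i)$ feeds a factor $\partial_ax^i$ into a contraction against a lowered component of $N$ or $N'$. That contraction is precisely a pairing $\bar g(e_a,N)$ or $\bar g(e_a,N')$, which is zero because $e_a$ is tangent and $N,N'$ are normal. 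I would carry out the three contractions one at a time, in each case pushing the index bookkeeping until the telltale factor $\bar g(e_a,N)$ or $\bar g(e_a,N')$ surfaces and is set to zero.

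Finally, for the constant case I would observe that $f=\mathrm{const}$ forces $\{x^i,f\}=0$, so the splitting collapses to $\mathcal S^{ij}(fN)=f\,\mathcal S^{ij}(N)$ with no appeal to the normality of $N$; thus $N$ may be taken arbitrary in $\bar\VF(\Sigma)$. The only potentially surviving cross term is then the one built from $\{x^j,h\}$ against $\mathcal S(N)$, and it still vanishes by the same argument, since $N'$ remains normal and hence $\bar g(e_a,N')=0$. The main obstacle is not conceptual but purely the careful index bookkeeping in the cross terms---recognising, despite the shuffling of the four free indices through two copies of $\bar g$, that each cross term genuinely factors through a single tangent--normal pairing. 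Since this is exactly the structure exploited in lemma 4.4 of \cite{ArnHopHuis}, and since the orthogonality $\bar g(e_a,N)=0$ is insensitive to the signature of $\bar g$, the computation should carry over essentially verbatim to the pseudo-Riemannian setting.
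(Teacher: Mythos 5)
Your proof is correct and is essentially the argument the paper relies on: the paper does not reproduce a proof but defers to lemma 4.4 of \cite{ArnHopHuis}, whose proof is exactly this Leibniz-rule splitting $\mathcal S^{ij}(fN)=\{x^i,f\}N^j+f\,\mathcal S^{ij}(N)$ followed by killing the cross terms via the tangent--normal pairings $\bar g(e_a,N)=\bar g(e_a,N')=0$, which, as you note, is signature-independent. Your treatment of the constant-$f$ case (only the cross term against $N'$ survives the splitting, and it dies by normality of $N'$ alone) matches the stated refinement as well.
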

  We concentrate on the case $M=\R^m_\nu$ with metric as above. Lemma \ref{lemma:DoubleTrace} becomes
  \[
    \bar g_i\bar g_j\{x^i,fN^j\}\{x^j,hN'^i\}=fh\bar g_i\bar g_j\{x^i,N^j\}\{x^j,N'^i\}
  \]
  for $f,h,N,N'$ as above. As a computational device, we need
  \begin{lemma}
  \label{lemma:OrthTranspose}
    Let $e_1,\ldots,e_n$ be a basis of the scalar product space $(V,g)$, $\hat g=(g_{ij})$ the matrix of $g$ in this basis and $\hat g^{-1}=(g^{ij})$ its inverse. Let $v_i=v_i^je_j$ be an orthonormal basis of $V$, $\sigma_i=g(v_i,v_i)$. Then
	\[
	  \sum_{i=1}^n v^k_ig(v_i,v_i)v^l_i=g^{kl}.
	\]
  \end{lemma}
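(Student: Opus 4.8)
The plan is to read the claimed formula as the pseudo-Riemannian version of the completeness relation for an orthonormal basis, and to prove it by encoding orthonormality as a single invertible matrix identity. The heart of the matter is that passing from $\hat g$ to $\hat g^{-1}$ exchanges the roles played by the basis $(e_i)$ and the orthonormal basis $(v_i)$, and the sign matrix $\hat\sigma=\mathrm{diag}(\sigma_1,\ldots,\sigma_n)$ is its own inverse.

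First I would assemble the coordinate vectors of the $v_i$ into one matrix: let $C$ be the $n\times n$ matrix with entries $C_{ki}=v_i^k$, so that the $i$-th column of $C$ is $v_i$ expressed in the basis $e_1,\ldots,e_n$. The orthonormality condition $g(v_i,v_j)=\sigma_i\delta_{ij}$ reads $v_i^k v_j^l g_{kl}=\sigma_i\delta_{ij}$ in these coordinates, which is precisely the matrix equation
\[
  C^T\hat g\,C=\hat\sigma,\qquad \hat\sigma=\mathrm{diag}(\sigma_1,\ldots,\sigma_n).
\]
Since the $v_i$ form a basis, $C$ is invertible, and since each $\sigma_i=\pm1$ we have $\hat\sigma^2=I_n$, i.e. $\hat\sigma^{-1}=\hat\sigma$.

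Next I would simply invert this identity. Taking inverses of both sides of $C^T\hat g\,C=\hat\sigma$ gives $C^{-1}\hat g^{-1}(C^T)^{-1}=\hat\sigma^{-1}=\hat\sigma$, and multiplying by $C$ on the left and $C^T$ on the right yields $\hat g^{-1}=C\hat\sigma C^T$. Reading off the $(k,l)$-entry,
\[
  g^{kl}=(C\hat\sigma C^T)_{kl}=\sum_{i=1}^n C_{ki}\,\sigma_i\,C_{li}=\sum_{i=1}^n v_i^k\,g(v_i,v_i)\,v_i^l,
\]
which is exactly the asserted formula.

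There is no serious obstacle here; it is routine linear algebra once the bookkeeping is fixed. The only point requiring a moment's care — and the feature distinguishing the pseudo-Riemannian case from the Riemannian one — is the appearance of the signs $\sigma_i$: they enter because the diagonal matrix on the right of $C^T\hat g\,C=\hat\sigma$ is no longer the identity, yet they cause no trouble since $\hat\sigma^{-1}=\hat\sigma$. If one prefers to avoid inverting matrices, the same identity follows by expanding an arbitrary $w\in V$ in the orthonormal basis as $w=\sum_i\sigma_i\,g(w,v_i)\,v_i$ (using $\sigma_i^{-1}=\sigma_i$), writing this in the coordinates $e_1,\ldots,e_n$, and contracting with $\hat g^{-1}$; the matrix computation above is, however, the most economical.
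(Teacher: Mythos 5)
Your proof is correct and is essentially identical to the paper's: the paper likewise packages the coordinates of the $v_i$ into a matrix $V$, writes orthonormality as $V^T\hat g V=D$ with $D=\mathrm{diag}(\sigma_i)$, and uses $D^2=I$ to invert and obtain $VDV^T=\hat g^{-1}$. Your write-up merely spells out the inversion step that the paper compresses into one line.
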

  \begin{proof}
    $V_{ij}:=v^i_j$, $D:=\mathrm{diag}(\sigma_i)$ $\Rightarrow$ $V^T\hat g V=D$. $D^2=1$ implies $VDV^T=\hat g^{-1}$.
  \end{proof}

  \begin{theorem}
  \label{theorem:GaussMeanRnnuFull}
    The Gauss and mean curvature of a surface $\Sigma$ embedded in $\R^m_\nu$ are given by
    \begin{align}
\label{eq:GaussRnnuFull}K&=-\frac{\rho^4}{8g^2(p-1)!}\sum_L\bar g_i\bar g_r\bar g_n\eps_{jklL}\eps_{irnL}\{x^i,\{x^k,x^l\}\}\{x^j,\{x^r,x^n\}\}\\
\label{eq:MeanRnnuFull} H&=\frac{\rho^4}{8g^2(p-1)!}\sum_{L,k'}\bar g_j\bar g_k\bar g_l\eps_{irnL}\eps_{k'klL}\{x^i,x^j\}\{x^j,\{x^r,x^n\}\}\{x^k,x^l\}\partial_{k'},
	\end{align}
	where $\{\cdot,\cdot\}$ denotes the Poisson bracket \eqref{eq:PoissonBracket} on $\Sigma$.
  \end{theorem}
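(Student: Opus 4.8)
The plan is to combine the intrinsic formulas \eqref{eq:GaussRnnu} and \eqref{eq:MeanRnnu} with the explicit normal frame constructed in Proposition~\ref{prop:NormalVectors}, using Lemma~\ref{lemma:DoubleTrace} as the bridge that lets me sum over the normal directions cleanly. The starting point is equation \eqref{eq:GaussRnnu}, which expresses $K$ as $-\frac{\rho^2}{2g}\sum_A\sigma_A\bar g_i\bar g_j\{x^i,N_A^j\}\{x^j,N_A^i\}$. In the notation of Lemma~\ref{lemma:DoubleTrace} the summand is precisely $\sigma_A\mathcal S^i_j(N_A)\mathcal S^j_i(N_A)$, so that $K=-\frac{\rho^2}{2g}\sum_A\sigma_A\mathcal S^i_j(N_A)\mathcal S^j_i(N_A)$. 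The idea is now to replace the abstract orthonormal frame $N_A$ by the explicit vectors $\hat N^I=Z^JE^I_J$ from Proposition~\ref{prop:NormalVectors}. Since the non-vanishing $\hat N^I$ form an orthonormal frame with $\sigma_I=\bar g(\hat N^I,\hat N^I)$, and the vanishing ones contribute nothing, I may sum over all multi-indices $I$ and write $K=-\frac{\rho^2}{2g}\sum_I\sigma_I\mathcal S^i_j(\hat N^I)\mathcal S^j_i(\hat N^I)$.

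Next I would push the sum over $I$ inward. Writing $\hat N^I=E^I_J Z^J$ and treating $E^I_J$ as constants pointwise, Lemma~\ref{lemma:DoubleTrace} (the $f=\mathrm{const}$ case, which applies to arbitrary $\bar\VF(\Sigma)$ entries) gives $\mathcal S^i_j(\hat N^I)=E^I_J\,\mathcal S^i_j(Z^J)$, so the summand becomes $\sigma_I E^I_J E^I_K\,\mathcal S^i_j(Z^J)\mathcal S^j_i(Z^K)$. The key algebraic step is then $\sum_I\sigma_I E^I_J E^I_K$: by Lemma~\ref{lemma:OrthTranspose}, applied in the scalar-product space $\bar\Tensor^0_{p-1}(\Sigma)$ with metric $\bar g_\otimes$ whose matrix in the $\dx^J$ basis is $\bar g^{JK}$, this contraction equals the inverse metric, namely $\bar g_{JK}$ (the lowered-index object). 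Hence the double sum collapses to $\sum_{J,K}\bar g_{JK}\mathcal S^i_j(Z^J)\mathcal S^j_i(Z^K)$, eliminating all reference to the auxiliary eigenbasis $E^I$ and leaving only the explicit tensor $Z$.

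It then remains to substitute the explicit formula \eqref{eq:ZTensorUp} for $Z^J$ and compute $\mathcal S^i_j(Z^J)=\frac{1}{\rho}\eps^{ab}(\partial_a x^i)(\bar\nabla_b Z^J)^j$. In $\R^m_\nu$ the connection coefficients vanish, so $\mathcal S^{ij}(Z^J)$ reduces to $\{x^i,(Z^J)^j\}$, and inserting $(Z^J)^j=\frac{\rho}{2\sqrt{|g|(p-1)!}}\bar g_{kr}\bar g_{ln}\eps^{jrnJ}\{x^k,x^l\}$ produces the iterated Poisson brackets $\{x^i,\{x^k,x^l\}\}$ together with the two Levi-Civita factors. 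The contraction $\sum_J\eps_{jklJ}\eps^{irnJ}$ (equivalently the raised/lowered pair that appears after absorbing $\bar g_{JK}$) is what relabels the remaining free indices into the form $\eps_{jklL}\eps_{irnL}$ in \eqref{eq:GaussRnnuFull}, and collecting the prefactors $\frac{\rho^2}{2g}$, the two copies of $\frac{\rho}{2\sqrt{|g|(p-1)!}}$, and the metric signs $\bar g_i\bar g_r\bar g_n$ yields exactly the claimed $-\frac{\rho^4}{8g^2(p-1)!}$. For the mean curvature \eqref{eq:MeanRnnuFull} I would run the identical argument starting from \eqref{eq:MeanRnnu}: the scalar coefficient $\sum_A\sigma_A\bar g_i\bar g_j\{x^i,x^j\}\{x^j,N_A^i\}$ is handled by the same $\hat N^I=E^I_J Z^J$ substitution and Lemma~\ref{lemma:OrthTranspose} collapse, while the extra vector factor $N_A^k\partial_k=\sum_I\hat N^{I,k}\partial_k$ supplies the third $Z$ and the extra index $k'$ after a final application of Lemma~\ref{lemma:OrthTranspose}.

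The main obstacle I anticipate is bookkeeping rather than conceptual: I must verify that Lemma~\ref{lemma:DoubleTrace} genuinely licenses pulling the constant matrix entries $E^I_J$ out of $\mathcal S$ in the presence of three separate $Z$-factors for the mean-curvature case, and that the identification of $\sum_I\sigma_I E^I_J E^I_K$ with $\bar g_{JK}$ via Lemma~\ref{lemma:OrthTranspose} respects the $(-1)^\delta$ sign conventions built into $\mathcal Z$ and into $\bar g(\hat N^I,\hat N^I)=\pm\mu^I\delta^{IJ}$ from Proposition~\ref{prop:NormalVectors}. Getting these signs and the powers of $\rho$, $g$, and $|g|$ to assemble into the stated constants, and confirming that the contracted epsilon symbols reindex correctly into the single summation $\sum_L$, is where the care is required.
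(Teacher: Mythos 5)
Your overall route is exactly the paper's own: start from \eqref{eq:GaussRnnu}, substitute $\hat N^I=Z^JE^I_J$ from Proposition \ref{prop:NormalVectors}, strip off the eigenbasis using Lemmas \ref{lemma:DoubleTrace} and \ref{lemma:OrthTranspose}, and then insert \eqref{eq:ZTensorUp}. However, two of your steps are wrong as written. First, the $E^I_J$ are smooth functions on $\Sigma$ (eigencomponents of $\mathcal Z$), not constants, so the $f=\mathrm{const}$ clause of Lemma \ref{lemma:DoubleTrace} does not apply to them, and ``treating $E^I_J$ as constants pointwise'' is not legitimate for an operator that differentiates its argument: one has $\mathcal S^{ij}(fN)=f\,\mathcal S^{ij}(N)+\frac{1}{\rho}\eps^{ab}(\partial_ax^i)(\partial_bf)N^j$, so your standalone identity $\mathcal S^i_j(\hat N^I)=E^I_J\,\mathcal S^i_j(Z^J)$ is false. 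What rescues the step is that the $Z^J$ are \emph{normal} fields by \eqref{eq:NormalVector}, so the first (non-constant) case of Lemma \ref{lemma:DoubleTrace} applies at the level of the double-trace product, $\mathcal S^i_j(E^I_JZ^J)\mathcal S^j_i(E^I_KZ^K)=E^I_JE^I_K\,\mathcal S^i_j(Z^J)\mathcal S^j_i(Z^K)$, which is all you need; this is how the paper argues. (The $f=\mathrm{const}$ clause is genuinely needed for \eqref{eq:MeanRnnuFull}, but for the factor $\{x^i,x^j\}$, i.e.\ $\mathcal S$ of the position field, which is \emph{not} normal --- not for the $E^I_J$.)

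Second, and more consequentially, your key contraction is off by a sign: by Proposition \ref{prop:NormalVectors}, $\sigma_I=\bar g(\hat N^I,\hat N^I)=(-1)^\delta\mu^I\bar g_\otimes(E^I,E^I)$, whereas Lemma \ref{lemma:OrthTranspose} controls the sum weighted by $\bar g_\otimes(E^I,E^I)$, namely $\sum_IE^I_J\,\bar g_\otimes(E^I,E^I)\,E^I_K=\bar g_{JK}$. Hence $\sum_I\sigma_IE^I_JE^I_K=(-1)^\delta\bar g_{JK}$ (after discarding the $\mu^I=0$ terms, whose contributions vanish because $\hat N^I=0$ there), not $\bar g_{JK}$ as you claim. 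That factor $(-1)^\delta$ is not decoration: combined with $g|g|=(-1)^{\ind g}g^2$ coming from the two factors of $\sqrt{|g|}$ in $Z$, and with $\eps^{irnL}=(-1)^{\ind\bar g}\eps_{irnL}$, it produces exactly the overall $-1$ in \eqref{eq:GaussRnnuFull}. With your version the computation ends at $(-1)^\delta$ times the stated formula, i.e.\ with the wrong sign whenever $\delta=\ind\bar g-\ind g$ is odd (e.g.\ a spacelike surface in Minkowski space) --- precisely the pseudo-Riemannian cases the theorem is about. You flag both points as ``where the care is required,'' but they are not deferrable bookkeeping: as written the argument does not close, although both gaps are repairable along the lines above.
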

  \begin{proof}
	Let $E^I$, $\mu^I$, $\hat N^I$ be as in proposition \ref{prop:NormalVectors} and its proof, $\sigma^I:=\bar g(\hat N^I,\hat N^I)=(-1)^\delta\mu^I\bar g_\otimes(E^I,E^I)$. Using equation \eqref{eq:GaussRnnu} and lemma \ref{lemma:DoubleTrace},
	\[
	  K=-\frac{\rho^2}{2g}\sum_I(-1)^\delta\mu^I\bar g_\otimes(E^I,E^I)E_K^IE_L^I\bar g_i\bar g_j\{x^i,Z^{Kj}\}\{x^j,Z^{Li}\}.
	\]
	Since for $\mu^I=0$ we have $\hat N^I=0$, the factor $\mu^I$ may be omitted. Lemma \ref{lemma:OrthTranspose} implies $\sum_I E_K^I\bar g_\otimes(E^I,E^I)E_L^I=\bar g_{KL}=\delta_{KL}\bar g_K$, so we get with equation \eqref{eq:ZTensorUp}
	\begin{align*}
	  K&=(-1)^{\delta+1}\frac{\rho^2}{2g}\bar g_i\bar g_j\{x^i,Z_L^j\}\{x^j,Z^{Li}\}\\
	   &=(-1)^{\ind\bar g+1}\frac{\rho^4}{8g^2(p-1)!}\bar g_i\bar g_r\bar g_n\eps_{jklL}\eps^{irnL}\{x^i,\{x^k,x^l\}\}\{x^j,\{x^r,x^n\}\}.
	\end{align*}
	Using $\eps^{irnL}=\det(\bar g_{ij})^{-1}\eps_{irnL}$, we obtain equation \eqref{eq:GaussRnnuFull}. Equation \eqref{eq:MeanRnnuFull} is proved similarly.
  \end{proof}

\section{Acknowledgments}

The results presented in this note originate from the work on my Bachelor Thesis at the University of G\"ottingen \cite{BA}. I would like to thank my supervisor Prof.~Dorothea Bahns for many helpful discussions, comments and suggestions.


\end{document}